\newtheorem{thm}{Theorem}
 \newtheorem{lem}[thm]{Lemma}
 \newtheorem{prop}[thm]{Proposition}
\theoremstyle{definition}
  \DeclareMathOperator{\ii}{i} 
  \DeclareMathOperator{\IM}{Im}
 \DeclareMathOperator{\supp}{supp} \DeclareMathOperator{\dist}{dist}
\DeclareMathOperator{\tr}{tr} \DeclareMathOperator{\RE}{Re}
\DeclareMathOperator{\diag}{diag}
\newcommand{\Real}{\mathbb{R}}
\newcommand{\Comp}{\mathbb{C}}
\newcommand{\Nat}{\mathbb{N}}
\newcommand{\eps}{\varepsilon}
\newcommand{\dts}{,\dots,}
\newcommand{\sbs}{\subset}
\newcommand{\set}[1]{\left\{#1\right\}}
\newcommand{\norm}[1]{\left\Vert#1\right\Vert}
\newcommand{\matp}[1]{\begin{bmatrix} #1 \end{bmatrix}}
\providecommand{\norm}[1]{\left\Vert#1\right\Vert}
\begin{document}

   \title[  $H$--selfadjont random matrices]
   {On a class of $H$--selfadjont random matrices with one eigenvalue of nonpositive type.}
   \author{ Micha\l{} Wojtylak}
   \address{
   Faculty of Mathematics and Computer Science\\
   Jagiellonian University\\
   \L ojasiewicza 6\\
   30-348 Krak\'ow\\
   Poland}
   \email{michal.wojtylak@gmail.com
}

\begin{abstract}
Large $H$--selfadjoint random matrices are considered. The matrix $H$ is assumed to have one negative eigenvalue, hence the matrix in question has precisely one  eigenvalue of nonpositive type. It is showed that this eigenvalue converges in probability to a deterministic limit. The weak limit of distribution of  the real eigenvalues is investigated as well.   
\end{abstract}
  \subjclass[2000]{Primary 15B52, Secondary 15B30, 47B50}
\keywords{Random matrix, Wigner matrix, eigenvalue, limit distribution of eigenvalues, $\Pi_1$--space.}
\thanks{The research was supported by Polish Ministry of Science and Higher Education with a Iuventus Plus grant. }

\maketitle

\section*{Introduction}

The main object of this survey are random matrices that are not symmetric, but are selfadjoint with respect to an indefinite inner product. Spectrum and numerical range of some classes of such  matrices  were considered recently in \cite{NCR1,NCR2}, although,   in the present paper we consider different instances.  The inefinite linear algebra motivation of the present research is the follwoing.  
Consider an invertible, hermitian--symmetric matrix $H\in\Comp^{n\times n}$. We say that $X\in\Comp^{n\times n}$ is $H$--selfadjoint if $X^*H=HX$. This is  the same as to say that $A$ is selfadjoint with respect to an inner product
$$
[x,y]_H:=y^*Hx,\quad x,y\in\Comp^n.
$$
Note that this inner product is not positive definite if $H$ has negative eigenvalues.
In the literature the space $\Comp^n$ with the  inner product $[\cdot,\cdot]_H$ is also called $\Pi_\kappa$--space (where $\kappa$ is the number of negative eigenvalues of $H$) or Pontryagin space, the infinite dimensional case is considered as well \cite{bognar, langerio}.
In the present paper the  case when 
\begin{equation}\label{H}
H =\left[ 
\begin{array}{cc}
-1 & 0  \\ 
0 & I_{N} 
\end{array}
\right],
\end{equation}
 is considered. It is easy to check that for such $H$  each $H$--selfadjoint matrix has the form
 \begin{equation}\label{A}
X=\left[
\begin{array}{cc}
a & -b^*\\ 
b & C  \\
\end{array}
\right],
\end{equation}
with $x\in\Real$, $b\in\Comp^N$ and a hermitian--symmetric matrix $C\in\Comp^{N\times N}$.  
Due to the famous theorem of Pontryagin \cite{pontr} the matrix $X$ has precisely one eigenvalue $\beta$, for which the corresponding eigenvector $x$ satisfies $[x,x]_H\leq 0$. The problem of tracking the nonpositive eigenvalue  was considered for example  in \cite{DHS3,SWW}. In those papers the setting was non--random and $X$ was in the family of one dimensional extensions of a fixed operator in an infinite dimensional $\Pi_1$--space.  
 The aim of the present work is to investigate the behavior of $\beta$ when  $X$ is a large random matrix. We show that the main method of \cite{DHS3,SWW} -- the use of Nevanlinna functions with one negative square -- can be adapted to the random setting as well.  
 
 A classical result of Wigner \cite{wigner} says that if the random variables $y_{ij}$, $0\leq i\leq j<+\infty$ are real, i.i.d with mean zero and  variance equal one, then the distribution of eigenvalues of a matrix 
  $$
  Y_N=\frac{1}{\sqrt{N}} [y_{ij}]_{ij=0}^N,
  $$
where $y_{ji}=y_{ij}$ for $j>i$, converge weakly in probability to the Wigner semicircle measure. Note that by multiplying
the first row of $Y_N$ by -1 we obtain a $H$--selfadjoint matrix $X_N$. A result  of a preliminary numerical experiment with gaussian $y_{ij}$  is plotted in Figure \ref{fig}. 
\begin{figure}
\includegraphics[width=200pt]{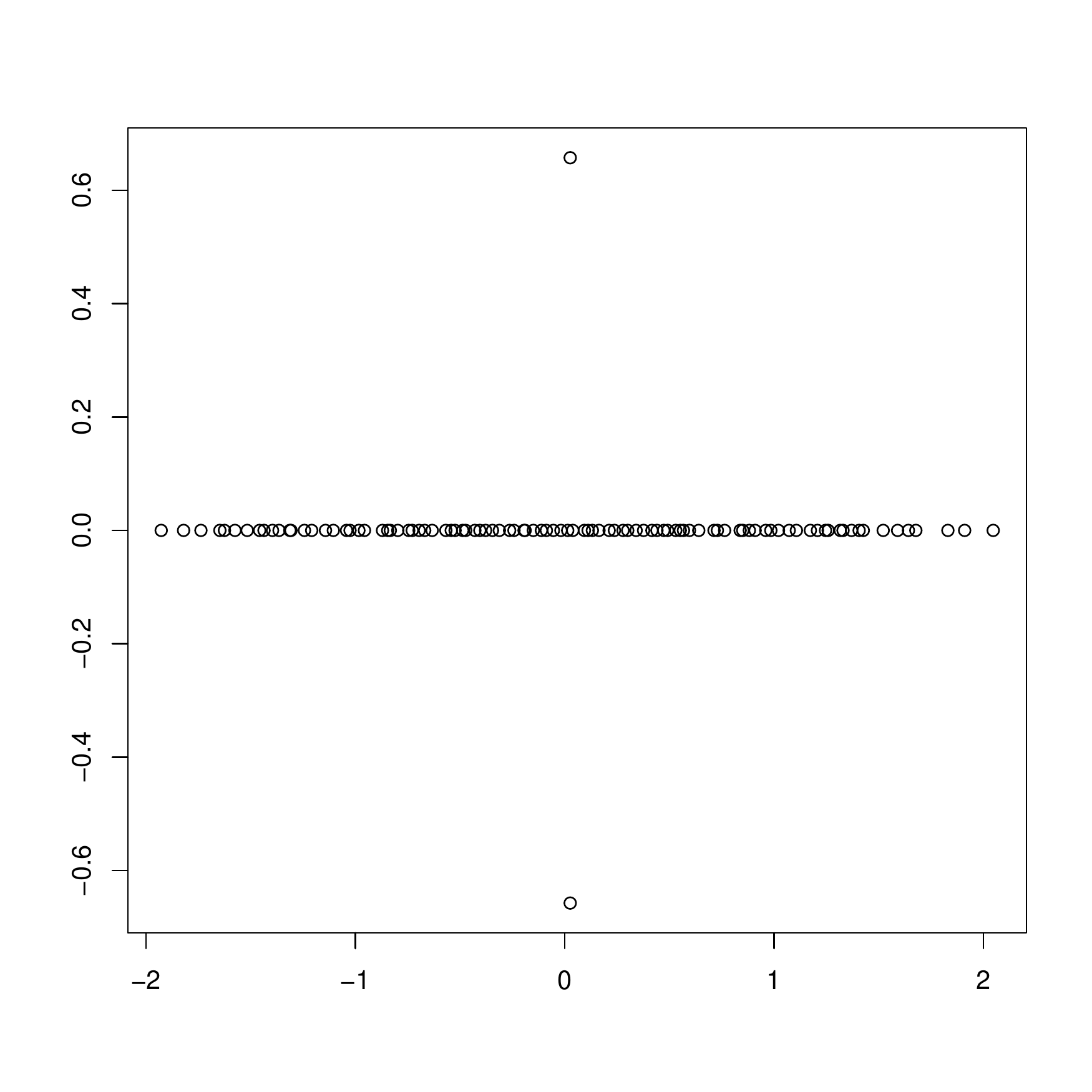}
\caption{Eigenvalues of a  random matrix $X_{100}$ computed with R \cite{R} }\label{fig} 
\end{figure}
Note  that the spectrum of $X_N$ is real, except two eigenvalues, lying symmetrically  with respect to the real line.  
Although we pay a special attention to the above case, we study the behavior of the eigenvalue of nonpositive type in a more general setting.
Namely, we assume that the random matrix $X_N$ in $\Comp^{N\times N}$ is of a form
$$
X=\left[
\begin{array}{cc}
a_N & -b_N^*\\ 
b_N & C_N  \\
\end{array},\right],
$$
with $a_N$, $b_N$ and  $C_N$ being independent. Furthermore,  the vector $b_N$ is  a column of a Wigner matrix and $a_N$ converges weakly to zero. The only assumption on $C_N$ is that the limit distribution of its eigenvalues converge weakly in probability, see (R0)--(R3) for details.  
In Theorem \ref{main} we prove  that under these assumptions the non--real eigenvlaues converge in probability  to a deterministic limit that can be computed knowing the limit distribution of eigenvalues of $C_N$. In the case when $C_N$ is a Wiegner matrix the nonreal eigenvalues converge to $\pm\ii\sqrt{2}/2$, cf. Theorem \ref{WiegnerTh}. Furthermore, under a technical assumption of continuity of the entries of $X_N$, we show in Theorem \ref{realev} that the limit distribution of the real eigenvalues of $X_N$  coincides  with the limit distribution of eigenvalues  for the matrices $C_N$. Again, in the case when $C_N$ is a Wiegner matrix we obtain a more precise result. Namely,  in Theorem \ref{WiegnerTh} we show that  the real  eigenvalues $\zeta_2^N\dts \zeta_N^N$ of $X_N$ and the eigenvalues  $\lambda_1^N\dts \lambda_N^N$ of $C_N$ satisfy the following inequalities: 
$$
\lambda_1^N < \zeta_2^N < \lambda_2^N < \cdots <\lambda_{N-1}^N < \zeta_{N}^N < \lambda_N^N.
$$
It shows that the  nonreal eigenvalue of $X_N$ plays an analogue role as the largest eigenvalue in one--dimensional, symmetric perturbations of Wiegner matrices. This fact  relates the present paper to  the current work on finite dimensional perturbations of random matrices, see \cite{BG1,BG2,BG3,cap1,capitaine,FP,knowles} and references therein.  Also note that $X_N$ is a product of a random  and deterministic matrix, such products  were already considered in the literature, see  e.g. \cite{vershynin}.

The author is indebted to Anna Szczepanek for preliminary numerical simulations and Maxim Derevyagin for his valuable comments. Special thanks to Anna Kula, Patryk Pagacz and Janusz Wysocza\'nski.

\section{Functions of class $\mathcal{N}_1$}\label{sN1}

The Nevanlinna functions with negative squares  play a similar role for the class of $H$--selfadjoint matrices as the class of ordinary Nevanlinna function plays for hermitian--symmetric matrices. This phenomenon has its roots in  in operator theory, we refer the reader to  \cite{DHS1, DHS3, KL71,KL77,KL81} and papers quoted therein for a precise description of a relation between   $\mathcal{N}_\kappa$--functions and selfadjoint operators  in Krein and Pontryagin spaces. We begin with a very general definition of the class $\mathcal{N}_\kappa$, but we immediately restrict ourselves to certain subclasses of those functions.

We say that $Q$ is \textit{a generalized Nevanlinna function of class $\mathcal N_{\kappa}$} \cite{KL71,  L} if it is meromorphic in the upper half--plane
$\Comp^+$ and the kernel 
$$
N(z,w)=\frac{Q(z)-\overline{Q(w)}}{z-\bar w}
$$
has precisely $\kappa$ negative squares,  that is for any finite sequence $z_1\dts z_k\in\Comp^+$  the hermitian--symmetric matrix
$$
[N(z_i,z_j)]_{ij=1}^k
$$
has not more then $\kappa$ nonpositive eigenvalues and for some choice of $z_1\dts z_k$ it has precisely $\kappa$ nonpositive eigenvalues. 
In the present paper we use this definition with $\kappa=0,1$. 

The class $\mathcal{N}_0$ is the class of ordinary Nevanlinna functions, i.e. the functions that are holomorphic in $\Comp^+$ with nonnegative imaginary part. By $M^+_b(\Real)$ we denote the set of positive, bounded Borel measures on $\Real$. For $\mu\in M^+_b(\Real)$  we define  the Stieltjes transform as
$$
\hat\mu(z)=\int_\Real \frac 1{t-z}dt ,\quad  z\in\Comp\setminus\supp\mu.
$$
 Clearly, $\hat\mu$ belongs to the class $\mathcal{N}_0$ and the values of $\hat\mu$ in the upper half--plane determine the measure uniquelly by the Stieltjes inversion formula. Although not every function of class $\mathcal{N}_0$ is a Stieltjes transform of a Borel measure (cf. \cite{donoghue}), this subclass of $\mathcal{N}_0$ functions will be sufficient for present reasonings. 
Also, we will be interested in a special subclass of $\mathcal{N}_1$ functions, namely in the functions of the form \eqref{Qform} below.  We refer the reader to the literature \cite{DHS1,DLLSh}  for representation theorems for $\mathcal{N}_\kappa$ functions.
\begin{prop}\label{N1}
 If  $\mu\in M_b^+(\Real)$, $a\in\Real$ then 
\begin{equation}\label{Qform}
Q(z)=\hat\mu(z) + a - z
\end{equation}
is a holomorphic function in $\Comp^+$ and belongs to the class $\mathcal{N}_1$. Furthermore, there exists precisely one $z_0\in\Comp$ such that either $z_0\in\Comp^+$ and
\begin{equation}\label{x_}
Q(z_0)=0,
\end{equation}
or $z_0\in\Real$ and
\begin{equation}\label{x_0}
\lim_{z \hat\to z_0}\frac{Q(z)}{z-z_0}\in (-\infty,0].
\end{equation}
\end{prop}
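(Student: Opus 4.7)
My plan is to separate the two assertions. First, that $Q\in\mathcal{N}_1$ is a direct kernel computation. Holomorphy on $\Comp^+$ is immediate from $\supp\mu\sbs\Real$, and since $a$ is real,
\[
\frac{Q(z)-\overline{Q(w)}}{z-\bar w} \;=\; \frac{\hat\mu(z)-\overline{\hat\mu(w)}}{z-\bar w}\;-\;1,
\]
so the kernel of $Q$ equals the (positive semidefinite) kernel of $\hat\mu\in\mathcal{N}_0$ plus the constant kernel $-1$; the latter has exactly one strictly negative eigenvalue on every sample, so the sum has at most one negative square, and $N_Q(iy,iy)=\IM Q(iy)/y\to -1$ as $y\to+\infty$ witnesses exactly one.

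\emph{Uniqueness of a zero in $\Comp^+$.} Suppose $Q(z_1)=Q(z_2)=0$ for some $z_1,z_2\in\Comp^+$. Then $\hat\mu(z_j)=z_j-a$, and comparing imaginary parts gives the normalization $\int |t-z_j|^{-2}\,d\mu(t)=1$ for $j=1,2$. Combined with the kernel identity $\hat\mu(z_1)-\overline{\hat\mu(z_2)}=(z_1-\bar z_2)\int (t-z_1)^{-1}(t-\bar z_2)^{-1}\,d\mu(t)$, whose left-hand side equals $z_1-\bar z_2$, this forces $\int (t-z_1)^{-1}(t-\bar z_2)^{-1}\,d\mu(t)=1$. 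The triangle inequality followed by Cauchy--Schwarz (and $|t-\bar z_2|=|t-z_2|$ for $t\in\Real$) bound the absolute value of this integral above by $1$; equality throughout then forces $|t-z_1|=|t-z_2|$ for $\mu$-a.e.\ $t$ with aligned complex phases, whence $z_1=z_2$ unless $\mu$ is a point mass. The point-mass case reduces $Q$ to a quadratic in $z$ and is handled directly.

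\emph{Existence and the real case.} Since $Q\in\mathcal{N}_1\setminus\mathcal{N}_0$, the Krein--Langer representation for $\mathcal{N}_1$-functions (cf.\ \cite{DHS1, DLLSh, KL71}) provides a generalized zero of nonpositive type; the asymptotic $Q(z)/z\to -1$ at infinity rules out $\infty$, so this zero lies in $\Comp^+\cup\Real$. Alternatively, one can trace $\arg Q$ around a tall rectangle in $\Comp^+$ indented above $\supp\mu$: the far legs contribute the winding of $-z$, while on the segment close to $\Real$ the nonnegativity of $\IM Q(x+i0^+)$ controls the argument, and the argument principle then produces exactly one finite zero of $Q$ of the required type. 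I expect the main obstacle to be the real case: verifying that the limit in \eqref{x_0} is nonpositive requires matching the local expansion of $Q$ at the real zero $z_0$ with the single negative square of $N_Q$ at the diagonal point $(z_0,z_0)$, which pins down the sign of $Q'(z_0)$.
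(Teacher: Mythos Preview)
The paper does not prove this proposition at all: it simply refers to \cite{KL77} for the fact that $Q\in\mathcal N_1$ and to \cite[Theorem~3.1, Theorem~3.1']{L} for the existence and uniqueness of the generalized zero of nonpositive type, together with the remark that the asymptotics $Q(z)/z\to-1$ exclude $z_0=\infty$. Your proposal therefore goes well beyond what the paper offers.

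Your kernel computation for $Q\in\mathcal N_1$ is correct and is exactly the standard decomposition: $N_Q=N_{\hat\mu}-\mathbf 1\mathbf 1^{\!*}$, a positive semidefinite kernel plus a rank--one negative kernel, hence at most one negative square; the diagonal value $N_Q(iy,iy)\to-1$ witnesses exactly one.

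Your uniqueness argument for zeros in $\Comp^+$ via Cauchy--Schwarz is correct, but the separate treatment of point masses is unnecessary. With $f(t)=(t-z_1)^{-1}$, $g(t)=(t-z_2)^{-1}$ in $L^2(\mu)$ one has $\|f\|=\|g\|=1$ and $\langle f,g\rangle=1$; equality in Cauchy--Schwarz combined with $\langle f,g\rangle=1$ forces $f=g$ $\mu$-a.e., hence $z_1=z_2$ whenever $\mu\neq 0$, including the single-atom case.

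What your direct arguments do not cover is uniqueness in the real alternative \eqref{x_0}, nor the mutual exclusion of the two alternatives. You close this gap the same way the paper does, by invoking the Kre\u\i n--Langer theory (\cite{KL77,L}), so your proof is complete by citation and matches the paper's at that point. The argument--principle sketch you add is too rough to stand on its own (the indentation near $\supp\mu$ and the control of $\arg Q$ along it would need genuine work, precisely in the real--GZNT situation you flag), so it should be treated as heuristic rather than as an alternative proof.
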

The symbol $\hat\to$ above denotes the non-tangential limit:
$$
z\in\Comp^+,\quad z\to z_0,\quad \pi/2-\theta \leq \arg(z-z_0)\leq \pi/2+\theta,
$$
with some $\theta\in(0,\pi/2)$. We call $z_0\in\Comp^+\cup\Real$  the \textit{generalized zero of
nonpositive type} (\textit{GZNT}) of $Q(z)$. 
The first part of the Proposition can be found e.g. in \cite{KL77}, while for the proof of the 'Furthermore' part in the general context\footnote{For arbitrary $\mathcal{N}_1$ function $z_0=\infty$ can be also the GZNT, in that case $\lim_{z \hat\to \infty} zQ(z) \in [0,\infty)$. However, this is clearly not possible for $Q$ of the form \eqref{Qform}.}
 we refer the reader to  \cite[Theorem 3.1, Theorem 3.1']{L}.
%
 In view of the above proposition we can define a function 
$$
G:M_b^+(\Real)\times\Real\to \Comp^+
$$
by saying that $G(\mu,a)$ is  the GZNT of the function $\hat\mu(z) + a - z$.
 The following proposition plays a crucial role in our arguments.

\begin{prop}\label{cont}
 The function $G$ is jointly continuous with respect to the weak topology on $M_b^+(\Real)$ and the standard topology on $\Real$.
\end{prop}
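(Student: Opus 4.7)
The plan is to combine locally uniform convergence of $Q_n(z) := \hat\mu_n(z) + a_n - z$ to $Q(z) := \hat\mu(z) + a - z$ on $\Comp^+$ with the uniqueness clause of Proposition~\ref{N1}, via a subsequence argument. Fix $(\mu_n, a_n) \to (\mu, a)$ in $M_b^+(\Real) \times \Real$ and put $z_n := G(\mu_n, a_n)$, $z_0 := G(\mu, a)$; the goal is $z_n \to z_0$.

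First I would show $Q_n \to Q$ locally uniformly on $\Comp^+$: for every $z \in \Comp^+$ the map $t \mapsto 1/(t-z)$ is bounded and continuous on $\Real$, so weak convergence gives pointwise $\hat\mu_n(z) \to \hat\mu(z)$; since $|\hat\mu_n(z)| \le \mu_n(\Real)/\IM z$ and $\sup_n \mu_n(\Real) < \infty$, Vitali's theorem upgrades this to locally uniform convergence. Next I would establish precompactness of $\{z_n\}$ in $\Comp^+ \cup \Real$: writing $z_n = x_n + \ii y_n$ with $y_n \ge 0$ and taking imaginary and real parts of $Q_n(z_n) = 0$ (interpreted non-tangentially when $y_n = 0$), one gets $\int |s - z_n|^{-2} d\mu_n(s) \le 1$, hence $y_n^2 \le \mu_n(\Real)$ (with equality when $y_n > 0$), while the real-part equation combined with uniform tightness of $\{\mu_n\}$ and boundedness of $\{a_n\}$ rules out $|x_n| \to \infty$.

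It then suffices to show every subsequential limit $z^*$ of $\{z_n\}$ equals $z_0$. If $z^* \in \Comp^+$, locally uniform convergence on a compact neighborhood of $z^*$ gives $Q(z^*) = \lim Q_n(z_n) = 0$, and uniqueness from Proposition~\ref{N1} forces $z^* = z_0$. If $z^* \in \Real$, the plan is to verify both defining conditions of a GZNT on $\Real$: a Fatou-type lemma for weakly convergent measures applied to $\int |s - z_n|^{-2} d\mu_n(s) \le 1$ gives $\int (s - z^*)^{-2} d\mu(s) \le 1$, hence $1/(s - z^*)$ is $\mu$-integrable by Cauchy--Schwarz, so $\hat\mu(z^*)$ is well-defined; a careful passage to the limit using this integrability together with weak convergence of $\mu_n$ then yields $\hat\mu_n(z_n) \to \hat\mu(z^*)$, so that $\hat\mu(z^*) = z^* - a$. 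Combining these gives $\lim_{z \hat\to z^*} Q(z)/(z - z^*) = \int (s - z^*)^{-2} d\mu(s) - 1 \in (-\infty, 0]$, exhibiting $z^*$ as the GZNT of $Q$ on $\Real$, and uniqueness again yields $z^* = z_0$.

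The main obstacle is the real-boundary case: the integrands $|s - z_n|^{-j}$ for $j = 1, 2$ become unbounded as $z_n$ approaches $\Real$, so weak convergence of $\mu_n$ alone does not control the limits. One has to combine the weak-convergence Fatou lemma, the uniform tightness of $\{\mu_n\}$, and the a priori constraint $\int |s - z_n|^{-2} d\mu_n(s) \le 1$ forced by the defining equation in order to pass to the limit rigorously.
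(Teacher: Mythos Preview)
Your plan is sound and, as far as I can see, can be completed to a correct proof; however, it takes a genuinely different route from the paper. The paper's proof is extremely short: it establishes locally uniform convergence of $Q_n$ to $Q$ on compacta $K\subset\Comp^+$ and then invokes the result of Langer--Luger--Matsaev \cite{LaLuMa} on convergence of generalized Nevanlinna functions as a black box to conclude that the GZNT of $Q_n$ converges to that of $Q$. Everything from your second paragraph onward --- the precompactness of $\{z_n\}$, the subsequence argument, and especially the delicate real-boundary case with the Fatou-type passage to the limit --- is precisely the content hidden inside that citation (proved there in greater generality for $\mathcal N_\kappa$). So what you are doing is reproving, in a self-contained way, the special case of \cite{LaLuMa} that is needed here. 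That buys independence from an external reference at the cost of length, and your honest identification of the $z^*\in\Real$ case as the main obstacle is exactly right.

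One incidental remark in your favour: for the locally uniform convergence step, your Vitali/Montel argument is actually more solid than the paper's. The paper bounds $\sup_{z\in K}|\hat\mu_n(z)-\hat\mu(z)|$ by $r\,|\mu_n-\mu|(\Real)$ and asserts that the total-variation norm tends to zero, which is not a consequence of weak convergence (take $\mu_n=\delta_{1/n}$, $\mu=\delta_0$). Pointwise convergence of $\hat\mu_n$ together with the uniform bound $|\hat\mu_n(z)|\le \mu_n(\Real)/\IM z$ and normality, as you propose, is the clean way to get the needed locally uniform convergence.
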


\begin{proof}
Assume that $(\mu_n)_n\sbs M^+_b(\Real)$ converges weakly to $\mu\in M^+_b(\Real)$ and $a_n\in\Real$ converges to $a\in\Real$ with $n\to\infty$. Take a compact $K$ in the open upper half--plane, with nonempty interior. Then $\hat\mu_n$ converges uniformly to $\hat\mu$ on the set $K$. Indeed, if $r=\sup_{t\in\Real,\ z\in K}1/|t-z|$ then
 $$
\sup_{z\in K}  |\hat\mu_n(z) - \hat\mu_0 (z)| \leq r |\mu_n - \mu_0|(\Real),
$$ 
the latter clearly converging to zero with $n\to\infty$. In consequence, $\hat\mu_n(z) + a_n - z$ converges to $\hat\mu(z) + a - z$ uniformly on $K$ with $n\to\infty$. By \cite{LaLuMa} the GZNT  of $\hat\mu_n(z) + a_n - z$ converges to the GZNT of $\hat\mu(z) + a - z$, which finishes the proof. 
\end{proof}

\section{$H$--selfadjoint matrices}
In this section we  review basic properties of selfadjoint matrices in indefinite inner product spaces introducing the concept of a canonical form and showing its relation with $\mathcal{N}_1$--functions.
Let $H\in\Comp^{(n+1)\times( n+1)}$ ($n\in\Nat\setminus\set0$) be an invertible, Hermitian--symmetric matrix. 
We say that $X\in\Comp^{(n+1)\times( n+1)}$ is \textit{$H$--selfadjoint} if $X^*H=HX$.
Our main interest will lie in the matrix
\begin{equation}\label{H}
H =\left[ 
\begin{array}{cc}
-1 & 0  \\ 
0 & I_{n} 
\end{array}
\right],
\end{equation}
where $I_n$ denotes the identity matrix of size $n\times n$. As it was already mentioned, each $H$--selfadjoint matrix has the form
 \begin{equation}\label{A}
X =\left[ 
\begin{array}{cc}
a & -b^*\\ 
b & C  \\
\end{array}
\right],
\end{equation}
with $a\in\Real$, $b\in\Comp^n$ and  hermitian--symmetric $C\in\Comp^{n\times n}$.
 Due to \cite{GLR}  there exists an invertible matrix $S$ and a pair of matrices $H',S'\in\Comp^{(n+1)\times(n+1)}$ such that  $X=S^{-1}X'S$ $H=S^* H' S$ and $X',H'$ are of one of the following forms:
 \begin{itemize}
 \item[Case 1.]  
$$
X'=\matp{ \beta & 0\\ 0 & \bar \beta} \oplus \diag (\zeta_2\dts \zeta_{n}),\quad H'=  \matp{ 0 & 1\\ 1 & 0} \oplus I_{n-1},
$$
with $\beta\in\Comp^+$, $\zeta_2\dts\zeta_{n}\in\Real$.

\item[Case 2.]
$$
X'= [\beta]\oplus\diag ( \zeta_1\dts \zeta_{n}),\quad H'= [-1] \oplus I_n,
$$
with $\beta\in\Real$, $\zeta_1\dts\zeta_{n}\in\Real$.

\item[Case 3.]
$$
X'=\matp{ \beta & 1\\ 0 &  \beta} \oplus \diag (\zeta_2\dts \zeta_{n}), \quad H'=\gamma  \matp{ 0 & 1\\ 1 & 0} \oplus I_{n-1},
$$
with $\beta\in\Real$, $\zeta_2\dts\zeta_{n}\in\Real$, $\gamma\in\set{-1,1}$.

\item[Case 4.] 
$$
X'=\matp{ \beta & 1 & 0 \\ 0 &  \beta & 1\\ 0 & 0 & \beta} \oplus \diag (\zeta_3\dts \zeta_{n}), \quad H'= \matp{ 0 &  0 & 1\\ 0 & 1 & 0\\ 1 & 0 & 0} \oplus I_{n-2},
$$
with $\beta\in\Real$, $\zeta_3\dts\zeta_{n}\in\Real$.

\end{itemize}
 It is easy to verify that in each case $X'$ is $H'$-symmetric. The pair $(X',H')$ is called \textit{the canonical form} of $(X,H)$. We refer the reader to \cite{GLR} for the proof and for canonical forms for general H--symmetric matrices and to \cite{bognar,langerio} for the infinite--dimensional counterpart of the theory.  At this point is enough to mention that the canonical form is uniquely determined (up to permutations of the numbers $\zeta_i$) for each pair $(X,H)$, where $X$ is $H$--selfadjoint. 
Note that in each of the cases $\beta$ is an eigenvalue of $X$ and there exists a corresponding eigenvector $x\in\Comp^{n+1}$ satisfying  $[x,x]_H\leq0$, furthermore, $\beta$ is the only eigenvalue in $\Comp^+\cup\Real$ having this property. Therefore, we will call $\beta$  the \textit{eigenvalue of nonpositive type of $X$}.  
 
Observe that the function 
\begin{equation}\label{QX}
Q(z)=a-z+b^*(C-z)^{-1}b
\end{equation}
is an $\mathcal{N}_1$--function. Indeed, if $D=UCU^*=\diag(\lambda_1\dts \lambda_n)$ is a  diagonalization of the hermitian--symmetric matrix $C$ and $d=Ub$ then
$$
Q(z)=a-z +\sum_{j=1}^n \frac{|d_j|^2}{\lambda_j-z}=a-z+\hat\mu(z),\quad\text{where}\quad  \mu=\sum_{j=1}^n|d_j|^2 \delta_{\lambda_j},
$$  
and we may apply Proposition \ref{N1}.
The following  lemma is a standard in the indefinite linear algebra theory. We present the proof for the reader's convenience.

\begin{lem}\label{ZZ}
Let $X$ and $Q$ be defined  by \eqref{A} and \eqref{QX}, respectively. A point $\beta\in\Comp^+\cup\Real$ is the eigenvalue of nonpositive type of $X$ if and only if it is the GZNT of $Q(z)$. Furthermore, the algebraic multiplicity of $\beta$ as an eigenvalue of $X$ equals the order of $\beta$ as a zero of $Q(z)$.
\end{lem}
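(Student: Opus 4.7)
The plan is to exploit the Schur complement identity
\begin{equation*}
\det(X - zI) = \det(C - z)\, Q(z),
\end{equation*}
which is initially valid for $z \notin \sigma(C)$ and extends to a polynomial identity by continuity. This immediately matches the algebraic multiplicity of $\beta$ as a root of $\det(X - zI)$ with the order of $\beta$ as a zero of $Q$, provided the potential pole of $Q$ at $\beta$ (possible only when $\beta \in \sigma(C)$) does not interfere. Since $C$ is hermitian--symmetric, any $\beta \in \Comp^+$ satisfies $\beta \notin \sigma(C)$, so the equivalence between $\beta$ being an eigenvalue of $X$ and $Q(\beta) = 0$ is direct in the complex case.

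Next I would analyze eigenvectors. For $\beta \notin \sigma(C)$ with $Q(\beta) = 0$, writing $v$ as the column vector with first entry $1$ and remaining block $-(C-\beta)^{-1} b$, one checks directly that $v$ is an eigenvector of $X$ and that
\begin{equation*}
[v,v]_H = -1 + b^* (C-\beta)^{-2} b = Q'(\beta)
\end{equation*}
when $\beta \in \Real$. For $\beta \in \Comp^+$, the identity $(\beta - \bar\beta)[v,v]_H = 0$, an immediate consequence of $X^*H = HX$, forces $[v,v]_H = 0$, so every non-real eigenvalue of $X$ is automatically of nonpositive type. Combined with Proposition~\ref{N1} and the uniqueness of the eigenvalue of nonpositive type, this yields the equivalence of the three conditions ``$\beta \in \Comp^+$ eigenvalue of nonpositive type'', ``$Q(\beta) = 0$'', and ``$\beta$ GZNT''. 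For $\beta \in \Real \setminus \sigma(C)$, $Q$ is holomorphic at $\beta$, so $\lim_{z \hat\to \beta} Q(z)/(z - \beta) = Q'(\beta)$, and the GZNT condition $Q'(\beta) \in (-\infty,0]$ matches exactly $[v,v]_H \leq 0$.

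The delicate case is $\beta \in \Real \cap \sigma(C)$. If $\beta$ is an eigenvalue of nonpositive type with eigenvector $v$ of first entry $v_0$ and remaining block $w$, then $-|v_0|^2 + \|w\|^2 \leq 0$ together with $v \neq 0$ forces $v_0 \neq 0$. The eigenvalue equation then yields $(C-\beta)w = -b v_0$, which is solvable only if $b \perp \ker(C-\beta)$. Using the spectral representation $Q(z) = a - z + \sum_j |d_j|^2/(\lambda_j - z)$, this orthogonality is precisely the condition that the residue of $Q$ at $\beta$ vanishes, so $Q$ is actually holomorphic at $\beta$ and the analysis reduces to the previous case. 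Tracking the cancellation in $\det(X-zI) = \det(C-z)\,Q(z)$ shows that the multiplicities still match.

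The step I expect to be the main obstacle is the multiplicity count in the Jordan block cases (Cases~3 and 4 of the canonical form), where one must confirm that $Q$ has a zero at $\beta$ of order exactly equal to the size of the Jordan block. This can be handled either by direct computation in the canonical form, exploiting the explicit shape of $X'$ and $H'$ and the invariance of $\det(X-zI)$ and $Q(z)$ under the similarity $X=S^{-1}X'S$, or by a perturbation argument: perturb $b$ slightly to push $\beta$ off $\sigma(C)$ and pass to the limit using continuity of the roots of $Q$ and of $\det(X-zI)$ in the coefficients.
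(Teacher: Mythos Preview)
Your approach is correct in spirit and genuinely different from the paper's. The paper does not use the determinant identity $\det(X-z)=\det(C-z)\,Q(z)$ at all; instead it starts from the compression formula $-1/Q(z)=e^{*}H(X-z)^{-1}e$ (with $e$ the first standard basis vector), transforms to the canonical pair $(X',H')$ via $X=S^{-1}X'S$, $H=S^{*}H'S$, and then reads off the partial-fraction expansion of $-1/Q$ case by case (Cases~1--4). In each case the order of the pole of $-1/Q$ at $\beta$ is seen directly from the size of the canonical block, and this order is simultaneously the order of the zero of $Q$ and the length of the Jordan chain at~$\beta$.

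Your route via the explicit eigenvector $v=(1,\,-(C-\beta)^{-1}b)^{\top}$ and the identity $[v,v]_H=Q'(\beta)$ is more elementary and quite elegant in the generic situations (Cases~1 and~2, $\beta\notin\sigma(C)$): it links the GZNT condition $Q'(\beta)\le 0$ directly to the sign of the indefinite norm of the eigenvector, without invoking the canonical-form classification. The paper's method, in exchange for that machinery, handles all four cases uniformly, including the Jordan blocks you flag as the main obstacle.

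One point to tighten: the determinant identity actually gives
\[
(\text{alg.\ mult.\ of }\beta\text{ in }X)=(\text{alg.\ mult.\ of }\beta\text{ in }C)+(\text{order of the zero of }Q\text{ at }\beta),
\]
so when $\beta\in\sigma(C)$ the two numbers in the lemma do not coincide through your identity alone. Your orthogonality observation $b\perp\ker(C-\beta)$ is exactly what reconciles this---each $C$-eigenvector $u$ for $\beta$ lifts to a positive-type $X$-eigenvector $(0,u)^{\top}$, accounting for the extra multiplicity---but this bookkeeping should be made explicit rather than summarized as ``the multiplicities still match''. For Cases~3 and~4 your proposal remains a sketch; the paper's canonical-form computation is the cleanest way to close that gap, while your perturbation alternative would require some care to ensure the type of the eigenvalue is preserved in the limit.
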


\begin{proof}
First note that, due to the Shur complement formula\footnote{It is well known \cite{KL77} that $-1/Q$ belongs to $\mathcal{N}_1$ provided that $Q$ belongs to $\mathcal{N}_1$, however, this information is not essential for the proof.},
$$
-\frac{1}{Q(z)}={e^*H(X-z)^{-1}e},
$$
where $e$ denotes the first vector of the canonical basis of $\Comp^{n+1}$. Let  $(X',H')$ be the canonical form of $(X,H)$ and let $S$ be the appropriate transformation. Consequently,
\begin{equation}\label{-1/Q}
-\frac{1}{Q(z)}= {(Se_1)^* H' (X'-z)^{-1} Se_1}.
\end{equation}
Below we evaluate this expression in each of the Cases 1--4. Let $f=[f_0\dts f_n]^\top=Se$. Note that 
 \begin{equation}\label{ef}
f_1^* H'  f_1= e_1^*He_1=-1,
\end{equation}
independently on the Case. 

Case 1. Observe that $f_0\bar f_1\neq 0$, otherwise $f^*H'f \geq 0$, which contradicts \eqref{ef}. Due to \eqref{-1/Q} one has
$$
-\frac{1}{Q(z)}=  \frac{f_0\bar f_1}{\beta-z}+\frac{f_1 \bar f_0}{\bar\beta-z} +\sum_{j=2}^{n} \frac{ |f_{j}|^2}{\zeta_j-z}.
$$
Hence, $\beta\in\Comp^+$ is a simple pole of $-1/Q$ and consequently it is the GZNT of $Q$ and a simple zero of $Q$. 

Case 2. Observe that $|f_0|^2>\sum_{j=1}^n|f_j|^2$, otherwise  $f^*H'f \geq 0$, which contradicts \eqref{ef}. Due to \eqref{-1/Q} one has
$$
-\frac{1}{Q(z)}=  \frac{-|f_0|^2}{\beta-z}+\sum_{j=1}^{n} \frac{ |f_{j}|^2}{\zeta_j-z}.
$$
 Hence, the residue of  $-1/Q$ in $\beta$ is less then zero. Consequently $Q(\beta)=0$, $Q'(\beta)<0$ and $\beta$ is the GZNT of $Q$. 

Case 3. Observe that  $|f_1|^2>0$,  otherwise  $f^*H'f \geq 0$, which contradicts  \eqref{ef}. Due to \eqref{-1/Q} one has
$$
-\frac{1}{Q(z)}=  \frac{2\gamma\RE f_0\bar f_1}{\beta-z}+  \frac{-\gamma|f_1|^2}{(\beta-z)^2}+ \sum_{j=2}^{n} \frac{ |f_{j}|^2}{\zeta_j-z}.
$$
 Hence, $\beta$ is pole of $-1/Q$ of order 2. Consequently, $Q(\beta)=Q'(\beta)=0$, $Q''(\beta)\neq 0$ and $\beta$ is the GZNT. 

Case 4. Observe that $|f_2|^2>0$,  otherwise $f^*H'f\geq 0$, which contradicts \eqref{ef}. Due to \eqref{-1/Q} one has
$$
-\frac{1}{Q(z)}=  \frac{2\RE f_0\bar f_2+|f_1|^2}{\beta-z}+  \frac{-2\RE f_1\bar f_2}{(\beta-z)^2}+\frac{|f_2|^2}{(\beta-z)^3}+ \sum_{j=3}^{n} \frac{ |f_{j}|^2}{\zeta_j-z},
$$
  Hence, $\beta$ is pole of $-1/Q$ of order 3. Consequently, $Q(\beta)=Q'(\beta)=Q''(\beta)=0$, $Q'''(\beta)\neq0$ and $\beta$ is the GZNT of $Q$.
\end{proof}

\section{Random $H$--selfadjoint matrices}

By $X_N$, $H_N$ we understand the following pair of a random and deterministic matrix in $\Comp^{(N+1)\times(N+1)}$
\begin{equation}\label{XH}
X_N=\left[ 
\begin{array}{cc}
a_N & -b_N^*\\
b_N & C_N \\
\end{array}
\right], \quad 
H_N=\matp{-1 & 0 \\ 0 & I_{N}},
\end{equation}
where
$a_N$ is a real--valued random variable, $b_N$ is a random vector in $\Comp^N$, and  $C_N$ is a hermitian--symmetric  random matrix in $\Comp^{N\times N}$. Note that $X_N$ is $H_N$--symmetric. By $\lambda_1^N\leq\cdots \leq\lambda_N^N$ we denote the eigenvalues of $C_N $ and by  $\nu_N$ we denote the random measure on $\Real$ 
 $$
 \nu_N=\frac 1N \sum_{j=1}^N \delta_{\lambda_j^N}.
 $$
 Recall that
 \begin{equation}\label{RStj}
 \hat \nu_N(z) = \frac { \tr (C_N-z)^{-1}}N.
 \end{equation}
 The assumptions on $X_N$ are as follows:
\begin{itemize}
\item[(R0)] The random variable $a_N$ is independent on the entries of the vector $b_N$ and on the entries of the matrix $C_N$ for each $N>0$, futhermore $a_N$   converges with $N\to \infty$ to zero  in probability.
\item[(R1)] The random vector $b_N$ is of the form
$$
b_N:= \frac1{\sqrt{N}}[x_{j0}]_{j=1\dts N},
$$
where $[x_{j0}]_{j>0 }$ are i.i.d. random variables, independent on the entries of $C_N$ for $N>0$, of zero mean  with $E|x_{j0}|^2={s}^2$  for $ j>0 $.
\item[(R2)]  The random measure $\nu_N$  converges with $N\to \infty$  to some non--random measure $\mu_0$ weakly in probability  
\end{itemize}

 All the results below hold also in the case when all variables $x_{j0}$ ($j>0$) are real, in this situation $b_N^*$ is just the transpose of $b_N$.  The entries of $C_N$ might be as well real or complex. In Section \ref{sW} we will consider two instances of the matrix $C_N$: a Wiegner matrix  and a diagonal matrix.
 In the case when $C_N$ is a Wiegner matrix the proposition below is a consequence of the isotropic semicircle law \cite{Erdos,knowles}.  We present below a simple proof of the general case, based on the ideas in \cite{MP69}.

\begin{prop}\label{BQ}
Assume that {\rm (R1)} and {\rm (R2)} are satisfied. Then for each $z\in\Comp^+$ 
$$
b_N^*(C_N -z)^{-1} b_N \to {s}^2\ \hat\mu_0(z) \quad (N\to\infty)
$$
 in probability.
\end{prop}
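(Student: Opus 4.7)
The plan is a standard Marchenko--Pastur-style reduction: condition on $C_N$ and apply a conditional Chebyshev inequality, so it suffices to verify that the conditional mean converges to $s^2\hat\mu_0(z)$ and that the conditional variance tends to $0$ in probability. Setting $M_N:=(C_N-z)^{-1}$ and $X_j:=x_{j0}$, the expansion
$$
b_N^*(C_N-z)^{-1} b_N = \frac{1}{N}\sum_{i,j=1}^N \bar X_i (M_N)_{ij} X_j,
$$
combined with independence of $(X_j)_j$ from $C_N$ and $E\bar X_i X_j = s^2\delta_{ij}$, yields via \eqref{RStj} that
$$
E\bigl[b_N^*(C_N-z)^{-1} b_N \,\bigm|\, C_N\bigr] = \frac{s^2}{N}\tr M_N = s^2\hat\nu_N(z).
$$
Since $t\mapsto (t-z)^{-1}$ is bounded and continuous on $\Real$ for fixed $z\in\Comp^+$, hypothesis (R2) gives $\hat\nu_N(z)\to\hat\mu_0(z)$ in probability, which is the desired limit for the conditional mean.

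Next I would split the centred quadratic form into an off-diagonal and a diagonal piece,
$$
S_N^{\mathrm{off}} := \frac{1}{N}\sum_{i\ne j}\bar X_i (M_N)_{ij} X_j, \qquad S_N^{\mathrm{diag}} := \frac{1}{N}\sum_{j=1}^N (M_N)_{jj}\bigl(|X_j|^2-s^2\bigr),
$$
and control each conditionally. For $S_N^{\mathrm{off}}$, the standard pairing of centred i.i.d.\ summands leaves only terms of the form $|(M_N)_{ij}|^2$ (and, if $EX_j^2\ne 0$, $(M_N)_{ij}\overline{(M_N)_{ji}}$), hence
$$
E\bigl[|S_N^{\mathrm{off}}|^2\,\bigm|\,C_N\bigr] \le \frac{c_s}{N^2}\,\norm{M_N}_{\mathrm{HS}}^2 \le \frac{c_s}{N\,(\IM z)^2},
$$
using $\norm{M_N}_{\mathrm{HS}}^2=\sum_{k=1}^N|\lambda_k^N-z|^{-2}\le N/(\IM z)^2$; this tends to $0$ in $L^1$.

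The main obstacle is $S_N^{\mathrm{diag}}$: the i.i.d.\ variables $\xi_j:=|X_j|^2-s^2$ are centred but may fail to have a finite variance, since (R1) assumes only $E|X_j|^2=s^2$. I would sidestep this by truncation: for a parameter $K>0$ write $\xi_j=\xi_j^{\le K}+\xi_j^{>K}$ with $\xi_j^{\le K}:=\xi_j\mathbf{1}_{|\xi_j|\le K}-E[\xi_j\mathbf{1}_{|\xi_j|\le K}]$ centred and bounded by $2K$. The corresponding bounded contribution to $S_N^{\mathrm{diag}}$ has conditional variance at most $4K^2\norm{M_N}_{\mathrm{HS}}^2/N^2\le 4K^2/(N(\IM z)^2)$, which vanishes as $N\to\infty$ for each fixed $K$, while the tail contribution is controlled in $L^1$ by $(\IM z)^{-1}E|\xi_j|\mathbf{1}_{|\xi_j|>K}$, which tends to $0$ as $K\to\infty$ by dominated convergence (using $E|\xi_j|\le 2s^2$). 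Letting $N\to\infty$ first and then $K\to\infty$ shows $S_N^{\mathrm{diag}}\to 0$ in probability; a conditional Chebyshev inequality then combines the convergence of the conditional mean with the vanishing of the conditional variance to deliver the claim.
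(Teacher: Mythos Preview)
Your argument is correct and follows the same Marchenko--Pastur strategy as the paper: reduce via Chebyshev to showing that $b_N^*(C_N-z)^{-1}b_N - s^2\hat\nu_N(z)$ has vanishing (conditional) second moment, then invoke (R2). The paper carries this out by expanding $E\bigl|b_N^*(C_N-z)^{-1}b_N - s^2\tr(C_N-z)^{-1}/N\bigr|^2$ directly, showing the cross term vanishes and bounding the remainder by $s^4\|M_N\|_{\mathrm{HS}}^2/N^2\le s^4/(N\,\IM z^2)$, without your diagonal/off-diagonal split.

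There is one genuine difference worth noting. The paper's expansion of $E|b_N^*M_Nb_N|^2$ tacitly drops the contribution of the coincidence $i=j=k=l$, which carries the factor $E|x_{0j}|^4$; as written, the paper's bound therefore implicitly uses a finite fourth moment that (R1) does not provide. Your truncation argument for $S_N^{\mathrm{diag}}$ is precisely what is needed to make the proof go through under the stated second-moment hypothesis, so in this respect your version is the more careful one.
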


By $\norm y$ we denote the euclidean norm of $y\in\Comp^n$.

\begin{proof}
 In the light of Chebyshev's inequality, \eqref{RStj} and assumption (R2) it is enough  to show that 
\begin{equation}\label{rr}
 \lim_{N\to\infty}E\left|b_N^*(C_N-z)^{-1}b_N - {s}^2\frac{\tr (C_N-z)^{-1}}N\right|^2 =0.
\end{equation}
 Observe that
 \begin{equation}\label{summand1}
E \left|b_N^*(C_N-z)^{-1} b_N -{s}^2\frac{\tr (C_N-z)^{-1}}N\right|^2=
 \end{equation}
$$\left(E\left|b_N^*(C_N-z)^{-1} b_N\right|^2 -  {s}^4E\left|\frac{\tr (C_N-z)^{-1}}N\right|^2 \right)-
$$
\begin{equation}\label{summand2}
  2\RE E\left({s}^2\overline{\frac{\tr (C_N-z)^{-1}}N}\left(b_N^*(C_N-z)^{-1} b_N-{s}^2 \frac{\tr (C_N-z)^{-1}}N\right)\right). 
\end{equation}
First we prove  that the  summand  \eqref{summand2} equals zero. Indeed,  conditioning on  the $\sigma$--algebra generated by the entries of the matrix $C_N$ and setting
$$
[c_{ij}]_{ij=1}^N=(C_N-z)^{-1}
$$ 
one obtains
$$
   E\left({s}^2\overline{\frac{\tr (C_N-z)^{-1}}N}\left(b_N^*(C_N-z)^{-1} b_N- {s}^2\frac{\tr (C_N-z)^{-1}}N\right)\right) =
 $$ $$
 E\left( {s}^2 \sum_{i=1}^N \frac{\overline{ c_{ii}}}N \left( \sum_{jk=1}^N c_{jk}\frac{x_{0j}\overline{x_{0k}}}{N} - {s}^2 \sum_{j=1}^N \frac{c_{jj}}N\right) \right)=
 $$ $$
 E\left( {s}^2 \sum_{i=1}^N \frac{\overline{ c_{ii}}}N \left( \sum_{j=1}^N c_{jj}\frac{s^2}{N} - {s}^2 \sum_{j=1}^N \frac{c_{jj}}N\right) \right)=0.
 $$

Next, observe that
$$
E|b_N^*(C_N-z)^{-1} b_N|^2 =  E \sum_{ijkl=1}^N  c_{ij} \overline{c_{kl}}\frac{ x_{0i} \overline{ x_{0j}} x_{0k} \overline{x_{0l}}}{N^2} = 
$$ $$
 {s}^4 \sum_{ij=1}^N \frac{ E( c_{ii} \overline{ c_{jj}} )}{N^2} + {s}^4  \sum_{ij=1}^N \frac{E(c_{ij}\overline{c_{ij}})}{N^2}=
{s}^{4}E\left|\frac{ \tr (C_N-z)^{-1}}{N}\right|^2 +{s}^{4}E \sum_{ij=1}^N \frac{c_{ij}\overline{c_{ij}}}{N^2}.
$$
This allows us to  estimate  \eqref{summand1} by
$$ 
  {s}^4 \left| E \sum_{ij=1}^N \frac{c_{ij}\overline{c_{ij}}}{N^2} \right| \leq
  {s}^4 E\frac{\norm{(C_N-z)^{-1}}^2}{N}=\frac{{s}^4 \dist(z,\sigma( C_N))^{-2}}N\leq \frac{{s}^4 }{\IM z^2N},
$$
which  finishes the proof of \eqref{rr}.


\end{proof}

Let $U_N$ be a unitary matrix, such that $U_NC_NU^*_N$ is diagonal and let $d_N=[d^N_1\dts d^N_N]^\top=U_Nb_N$. 
Denote by $\mu_N$ the measure defined by
$$
\mu_N = \sum_{j=1}^N  | d_{j}^N |^2 \delta_{\lambda^N_j},
$$
and observe that $\hat\mu_N(z)=b_N^*(C_N-z)^{-1}b_N$. 

\begin{prop}\label{conv}
Assume that {\rm(R1)} and {\rm(R2)} are satisfied. Then the sequence of random measures $\mu_N$ converge weakly with $N\to\infty$ to $\mu_0$ in probability.
\end{prop}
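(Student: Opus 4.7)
The plan is to deduce weak convergence of $\mu_N$ in probability from the pointwise convergence in $z$ of the Stieltjes transforms established in Proposition \ref{BQ}. The starting observation, already present in the paper, is that by unitary invariance
$$
\hat\mu_N(z)=\sum_{j=1}^N\frac{|d_j^N|^2}{\lambda_j^N-z}=b_N^*(C_N-z)^{-1}b_N,\qquad z\in\Comp^+,
$$
so Proposition \ref{BQ} gives $\hat\mu_N(z)\to s^2\hat\mu_0(z)$ in probability for every fixed $z\in\Comp^+$. In parallel, the total mass satisfies $\mu_N(\Real)=\norm{d_N}^2=\norm{b_N}^2=\frac{1}{N}\sum_{j=1}^N|x_{j0}|^2$, which tends in probability to $s^2$ by the weak law of large numbers.

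To upgrade these scalar convergences in probability to convergence in probability of measures in the weak topology, I use the standard subsequence criterion: it is enough to show that every subsequence of $(\mu_N)$ has a further subsequence along which the convergence holds almost surely. Given an arbitrary subsequence, fix a countable dense set $\{z_k\}_{k\ge 1}\subset\Comp^+$ and extract by diagonal selection a further subsequence $(N_j)$ along which, on a single event $\Omega_0$ of probability one, $\hat\mu_{N_j}(z_k)\to s^2\hat\mu_0(z_k)$ for every $k$ and $\mu_{N_j}(\Real)\to s^2$.

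On $\Omega_0$ the argument is pathwise, with a deterministic sequence of finite positive measures having bounded masses. Helly's selection theorem produces a further sub-subsequence along which $\mu_{N_j}$ converges vaguely to some finite positive Borel measure $\nu$; convergence of the masses upgrades vague to weak convergence, with $\nu(\Real)=s^2$. Passing to the limit in the Stieltjes transform then yields $\hat\nu(z_k)=s^2\hat\mu_0(z_k)$ for every $k$, and since both sides are holomorphic on $\Comp^+$ and coincide on a set with an accumulation point, $\hat\nu\equiv s^2\hat\mu_0$ on $\Comp^+$. The Stieltjes inversion formula forces $\nu=s^2\mu_0$. As every vague sub-subsequential limit equals $s^2\mu_0$, we conclude $\mu_{N_j}\to s^2\mu_0$ weakly on $\Omega_0$, which completes the argument.

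The main technical subtlety is the passage from pointwise convergence of Stieltjes transforms to genuine weak (as opposed to merely vague) convergence of measures: mere boundedness of masses would leave room for mass escaping to infinity, and it is precisely the strong control $\mu_N(\Real)\to s^2$, supplied by the law of large numbers applied to $\norm{b_N}^2$, that prevents this. Once this tightness is in place, Stieltjes inversion uniquely identifies the limit and the proof concludes cleanly.
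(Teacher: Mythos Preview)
Your argument is correct and follows the same strategy as the paper: control of the total mass $\mu_N(\Real)=\norm{b_N}^2$ via the law of large numbers, pointwise convergence of the Stieltjes transforms via Proposition~\ref{BQ}, and then the passage to weak convergence in probability. The only difference is presentational---the paper delegates this last step to Theorem~2.4.4 of \cite{AGZ} while you spell out the subsequence/Helly/Stieltjes-inversion argument explicitly (and use the weak rather than the strong law for the mass, which is sufficient); you also correctly identify the limit as $s^2\mu_0$, which is what the paper's own proof yields as well.
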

\begin{proof}
First note that almost surely $\mu_N(\Real) \to {s}^2\mu_0(\Real)$ with $N\to \infty$. Indeed,
$$
\mu_N(\Real)= \sum_{j=1}^N |d_j^N|^2 = \norm{d_N}^2 = \norm{b_N}^2=\frac1N\sum_{j=1}^N |x_{0j}|^2,
$$
which converges almost surely to ${s}^2$ by the strong law of large numbers. Furthermore,
Proposition \ref{BQ} shows that $\hat\mu_N(z)$ converges in probability to  $\hat\mu_0(z)$ for every $z\in\Comp^+$.
Repeating the proof of Theorem 2.4.4 of \cite{AGZ}  we get the weak convegence of $\mu_N$ in probability. 
\end{proof}

\section{Main results}

\begin{thm}\label{main}
 If {\rm(R0) -- (R2)} are satisfied then the eigenvalue of nonpositive type $\beta_N$ of $X_N$ converges in probability to the GZNT $\beta_0$ of the $\mathcal{N}_1$--function 
 $$
 Q_0(z)=-z+{s}^2\hat\mu_0(z).
 $$
\end{thm}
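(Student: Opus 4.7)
The plan is to reduce the theorem to the deterministic continuity statement of Proposition \ref{cont} by recognizing both $\beta_N$ and $\beta_0$ as values of the map $G$. Applied pathwise to the random pair $(X_N,H_N)$, Lemma \ref{ZZ} identifies $\beta_N$ as the GZNT of
$$
Q_N(z)=a_N-z+b_N^*(C_N-z)^{-1}b_N=\hat\mu_N(z)+a_N-z,
$$
that is, $\beta_N=G(\mu_N,a_N)$. Writing the target function as $Q_0(z)=\widehat{s^2\mu_0}(z)+0-z$, Proposition \ref{N1} applied with $\mu=s^2\mu_0$ and $a=0$ identifies $\beta_0=G(s^2\mu_0,0)$.

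The next step is to establish that $(\mu_N,a_N)\to(s^2\mu_0,0)$ in probability in the product $M_b^+(\Real)\times\Real$, where $M_b^+(\Real)$ carries its weak topology. Assumption (R0) handles the scalar component. For the measure component, Proposition \ref{BQ} gives $\hat\mu_N(z)\to s^2\hat\mu_0(z)$ in probability for every $z\in\Comp^+$, and the argument in the proof of Proposition \ref{conv} (with the factor $s^2$ that is implicit in Proposition \ref{BQ}) upgrades this pointwise statement to weak convergence in probability of $\mu_N$ to $s^2\mu_0$. Since the weak topology on $M_b^+(\Real)$ is metrizable (the space being Polish), both convergences are genuine convergences in probability in metric spaces, and coordinatewise convergence in probability is equivalent to joint convergence in probability in the product metric space.

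Finally, since $G$ is jointly continuous by Proposition \ref{cont} and the limit point $(s^2\mu_0,0)$ is deterministic, the continuous mapping theorem for convergence in probability yields
$$
\beta_N=G(\mu_N,a_N)\xrightarrow{P}G(s^2\mu_0,0)=\beta_0,
$$
as desired. The only real subtlety is bookkeeping: namely, reconciling the pathwise, deterministic framework of Section \ref{sN1} with the probabilistic inputs from Section 3. Once the weak topology on $M_b^+(\Real)$ is fixed as a Polish metric topology, the continuous mapping theorem is immediate and the proof is essentially a one-line composition, with all the analytic work already absorbed into Propositions \ref{cont}, \ref{BQ}, and \ref{conv}.
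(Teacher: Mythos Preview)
Your proposal is correct and follows essentially the same route as the paper's own proof: identify $\beta_N$ as $G(\mu_N,a_N)$ via Lemma~\ref{ZZ}, invoke (R0) and Proposition~\ref{conv} for convergence of the inputs, and conclude via the continuity of $G$ from Proposition~\ref{cont}. Your packaging is arguably cleaner---you appeal directly to the continuous mapping theorem and observe that coordinatewise convergence in probability suffices for joint convergence in a product metric space, whereas the paper writes out the $\varepsilon$--$\delta$ argument and invokes the independence of $a_N$ and $\mu_N$ (which, as you implicitly note, is not actually needed at this final step); you also keep track of the $s^2$ factor in the limit measure more carefully than the paper's statement of Proposition~\ref{conv}.
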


\begin{proof}
 Consider a sequence of  $\mathcal{N}_1$--functions 
\begin{equation}\label{QN}
Q_N(z)=a_N-z+\hat\mu_N(z). 
\end{equation}
Recall that each of those functions has precisely one GZNT which, by definition of $\mu_N$ and Lemma \ref{ZZ}, is the 
eigenvalue  of nonpositive type $\beta_N$ of $X_N$. Recall that $a_N$ converges to zero in probability by (R0) and $\mu_N$ converges to $\mu_0$ in probability by Proposition \ref{conv}. Let $d$ be any metric that metrizises the topology of weak convergence on $M_b^+(\Real)$. Since  $\beta_N$ is a continuous function of $\mu_N$ and $a_N$ (Proposition \ref{cont}), for each $\eps>0$ one can find $\delta >0$ such that for each $N>0$ the event $\set{|a_N|<\delta,\  d(\mu_N,\mu_0)<\delta}$ is contained in 
$\set{ |\beta_N - \beta_0|<\eps}$. 
Using the assumed in (R0) independence of $\mu_N$ and $a_N$  one obtains
$$
P(|\beta_0-\beta_N|\geq\eps) \leq P(|a_N|\geq\delta) \cdot P( d(\mu_N,\mu_0)\geq\delta ).
$$
Hence, 
$\beta_N$ converges to $\beta_0$ in probability.
  \end{proof}

As it was explained in Section \ref{N1}, each matrix $X_N$ has, besides the eigenvalue $\beta_N$ of nonpositive type, a set of real eigenvaules $\zeta_{k_N}^N\dts \zeta_N^N$, where $k_N=1$ in Case 1 and 3, $k_N=2$ in Case 2 and $k_N=3$ in Case 3. 
By $\tau_N$ we denote the empirical measure connected with these eigenvalues:
$$
 \tau_N=\frac1N\sum_{j=k_N} ^N \delta_{\zeta_j}^N.
$$ 

\begin{thm}\label{realev}
If {\rm(R0)--(R2)} are saisfied and the random variables $\set{x_{0j}: j>0}$ are  continuous, then the measure $\tau_N$ converges   weakly in probability to $\mu_0$.  
\end{thm}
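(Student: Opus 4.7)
The plan is to derive, via a Schur-complement identity, a relation between the Stieltjes transform of $\tau_N$ and that of $\nu_N$ through the $\mathcal{N}_1$-function $Q_N$ of \eqref{QN}, and then to let $N\to\infty$.

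First, applying the Schur complement with respect to the lower-right $N\times N$ block of $X_N-zI$ gives
\begin{equation*}
\det(X_N-zI)=\det(C_N-zI)\cdot Q_N(z),\qquad z\in\Comp^+.
\end{equation*}
Taking the logarithmic derivative in $z$ yields
\begin{equation*}
\tr(X_N-zI)^{-1}=\tr(C_N-zI)^{-1}-\frac{Q_N'(z)}{Q_N(z)}.
\end{equation*}
By the canonical form of $(X_N,H_N)$, the trace on the left splits as $N\hat\tau_N(z)+E_N(z)$, where $E_N(z)$ collects the contributions from the $\beta_N$-block: it equals $1/(\beta_N-z)+1/(\bar\beta_N-z)$ in Case~1 and $m/(\beta_N-z)$ with $m\in\{1,2,3\}$ in Cases~2--4. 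Using $\tr(C_N-zI)^{-1}=N\hat\nu_N(z)$, I obtain
\begin{equation*}
\hat\tau_N(z)-\hat\nu_N(z)=-\frac{1}{N}\frac{Q_N'(z)}{Q_N(z)}-\frac{1}{N}E_N(z).
\end{equation*}

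The next step is to show that the right-hand side tends to zero in probability at each fixed $z\in\Comp^+\setminus\{\beta_0\}$. By (R0) and Proposition \ref{BQ}, $Q_N(z)\to Q_0(z)$ in probability pointwise on $\Comp^+$. The family $\{Q_N\}$ is almost surely locally uniformly bounded on $\Comp^+$, because $|\hat\mu_N(z)|\leq\mu_N(\Real)/\IM z$ and $\mu_N(\Real)=\|b_N\|^2$ converges almost surely to $s^2$ by the strong law of large numbers. Applying Vitali's convergence theorem along almost surely convergent subsequences upgrades pointwise convergence in probability to locally uniform convergence in probability; in particular, $Q_N'(z)\to Q_0'(z)$ in probability at each $z\in\Comp^+$. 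Since $Q_0\in\mathcal{N}_1$ has $\beta_0$ as its only generalized zero of nonpositive type, $Q_0$ is nonvanishing on $\Comp^+\setminus\{\beta_0\}$, and the continuous mapping theorem yields that $Q_N'(z)/Q_N(z)$ is bounded in probability; division by $N$ then kills this term. Similarly, since $\beta_N\to\beta_0$ in probability by Theorem \ref{main}, $E_N(z)$ is bounded in probability on $\Comp^+\setminus\{\beta_0\}$, so $E_N(z)/N\to 0$ in probability as well.

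Combining with $\hat\nu_N(z)\to\hat\mu_0(z)$ in probability (a consequence of (R2)) gives $\hat\tau_N(z)\to\hat\mu_0(z)$ in probability for every $z\in\Comp^+\setminus\{\beta_0\}$. To upgrade this to weak convergence of $\tau_N$ to $\mu_0$ in probability, I would use a subsequence argument: from every subsequence, extract a further subsequence along which $\hat\tau_N(z)\to\hat\mu_0(z)$ almost surely for every $z$ in a fixed countable dense subset of $\Comp^+\setminus\{\beta_0\}$. Since the total masses $\tau_N(\Real)=(N-k_N+1)/N$ converge deterministically to $1=\mu_0(\Real)$, the classical fact that pointwise convergence of Stieltjes transforms together with convergence of total masses implies weak convergence of subprobability measures yields $\tau_N\to\mu_0$ weakly almost surely along that further subsequence, and the subsequence principle gives weak convergence in probability of the original sequence.

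The main obstacle I expect is the last passage: without the total-mass control, pointwise convergence of Stieltjes transforms would only give vague convergence, and one must rule out mass escaping to infinity. The continuity hypothesis on the variables $\{x_{0j}\}$ enters to guarantee that almost surely $|d_j^N|^2>0$ for every $j$, so that no eigenvalue of $C_N$ accidentally cancels a zero of $Q_N$ in the Schur factorization; under this non-degeneracy the logarithmic-derivative identity cleanly accounts for the algebraic multiplicities prescribed by the canonical form in all four cases.
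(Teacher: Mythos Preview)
Your argument is correct and takes a genuinely different route from the paper. The paper argues combinatorially: under the continuity assumption one has $d_j^N\neq 0$ a.s., so $\hat\mu_N$ has a simple pole at every $\lambda_j^N$; since $\hat\mu_N$ is increasing on each interval $(\lambda_j^N,\lambda_{j+1}^N)$, the function $Q_N$ has at least one real zero there, and a counting argument forces exactly one zero in each interval (except possibly one interval carrying three). This interlacing pins each $\zeta_j^N$ between two consecutive $\lambda_j^N$'s, from which $\tau_N$ and $\nu_N$ share the same weak limit.

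Your approach instead extracts the Stieltjes transform of $\tau_N$ directly from the logarithmic-derivative identity $\tr(X_N-z)^{-1}=\tr(C_N-z)^{-1}-Q_N'(z)/Q_N(z)$ and shows the discrepancy is $O(1/N)$ in probability. This is cleaner analytically and, notably, does \emph{not} really use the continuity hypothesis: for $z\in\Comp^+$ the Schur identity and the trace splitting via the canonical form hold unconditionally, so your last paragraph invoking $|d_j^N|^2>0$ is unnecessary. In that sense your proof actually establishes a slightly stronger statement than the one claimed. What the paper's interlacing argument buys, on the other hand, is the precise ordering \eqref{lambdazeta}, which is exactly what is needed later in Theorem~\ref{WiegnerTh}(ii); your trace method does not yield that by-product.
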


\begin{proof}
We use the notations $U_N,d_N$ and $\mu_N$ from the previous section, let also $Q_N$ be given by \eqref{QN}. 
  Note that the set $\set{y\in\Comp^N:(U_N y)_{j}=0}$ is of Lebesgue measure zero. Hence, with probability one $d_j^N\neq0$ for $j=1\dts N$, $N>0$. 
Therefore,  $\hat\mu_N(z)$ is a rational function almost surely with poles of order one in $\lambda_1^N\dts \lambda_N^N$. Furthermore, 
$$
Q_N(z) = \frac{(a_N-z)\ \prod_{j=1}^N(\lambda^N_j-z)+\sum_{i=1}^N|d^N_i|^2\prod_{j\neq i} (\lambda^N_j-z)    }{\prod_{j=1}^N(\lambda^N_j-z)}.
$$
In consequence, $Q_N$ has exactly $N+1$ zeros counting multiplicities, all of them different from $\lambda_1^N\dts\lambda_N^N$. 
Due to the Schur complement argument, each of those zeros is an eigenvalue of the matrix $X_N\in\Comp^{(N+1)\times(N+1)}$. 
Furthermore, due to Lemma \ref{ZZ} the algebraic multiplicity  of $\beta_N$ as eigenvalue of $X_N$ equals the order of $\beta_N$ as a zero of $Q_N$. In consequence, the spectrum of $X_N$ coincides with the zeros of  $Q_N$ and $\beta_N$ is the only zero of order possibly greater then one\footnote{In other words: $e$ is almost surely a cyclic vector of $X_N$.}.

On the other hand, the function $\hat\mu_N$ is increasing on the real line with simple poles in $\lambda_1^N\dts \lambda_N^N$. Hence,  in each of the intervals $(\lambda_j^N,\lambda_{j+1}^N)$ $(j=1\dts N-1)$ there is an odd number of zeros of $Q_N$, counting multiplicities.  Consequently,  in each of the intervals $(\lambda_j^N,\lambda_{j+1}^N)$ $(j=1\dts N-1)$ there is precisely one zero of $Q_N$, except possibly one interval that  contains three zeros of $Q_N$.  Out of these three zeros of $Q_N$ either one or two of them belong to the set 
$\set{\zeta_{k_N}^N\dts \zeta_{N}^N}$, accordingly to the canonical form of $X_N$. 
Hence,  in each of the intervals $(\lambda_j^N,\lambda_{j+1}^N)$ $(j=1\dts N-1)$ there is precisely one of the eigenvalues $\zeta_{k_N}^N\dts \zeta_{N}^N$, except possibly one interval that  contains two of the eigenvalues $\zeta_{k_N}^N\dts \zeta_{N}^N$. Consequently, the weak limit 
of $\tau_N$ in probability equals the weak limit of $\nu_N$. 

\end{proof}

\section{Two instances}\label{sW}
In the present section we consider two instances of  $C_N$: the  Wigner matrix and the  diagonal matrix. These both cases appear naturally as applications of main results. We refer the reader to \cite{P72} for a scheme joining both examples.

Consider an $H$--selfadjoint real Wigner matrix 
\begin{equation}\label{XW}
X_N:= \frac1{\sqrt{N}}\ H_N  [x_{ij}]_{ij=0}^N,
\end{equation}
with $x_{ij}$ real,  $x_{ij}= x_{ji}$ $(0\leq i < j <\infty)$,  i.i.d., of zero mean and variance equal to ${s}^2$,  and let  $H_N$ be defined as in \eqref{XH}.  Clearly $X_N$ is $H_N$--selfadjoint and satisfies (R0)--(R2) with $\mu_0$ equal to the Wiegner semicircle measure $\sigma$. The Stieltjes transform of the $\sigma$ equals 
$$
\hat\sigma(z) = \frac{-z+\sqrt{z^2 - 4{s}^2}}{2{s}^2}.
$$
 It is easy to check that $\beta_0=\frac{ \sqrt{2}}2 {s}\ii$ is a zero of $Q_0(z)=-z+{s}^2\hat\sigma(z)$.  Hence, $\beta_0$ is the GZNT of $Q_0$ and we have proved the first part of the theorem below.

\begin{thm}\label{WiegnerTh} Let  $X_N$ be defined by \eqref{XW}. Then
\begin{itemize}
\item [(i)]  $\beta_N$ converges in probability to 
$
\beta_0=\frac{ \sqrt{2}}2 {s}\ii;
$
\item[(ii)] if, additionally, the random variables $b_{ij}$ $(0\leq i<j<\infty)$ are continuous, then 
the probability of an event that there are precisely $N-1$ real eigenvalues $\zeta^N_2<\cdots <\zeta^N_N$ of $X_N$ and the 
inequalities
\begin{equation}\label{lambdazeta}
\lambda_1^N < \zeta_2^N < \lambda_2^N < \cdots <\lambda_{N-1}^N < \zeta_{N}^N < \lambda_N^N. 
\end{equation}
are satisfied, converges with $N$ to 1. 
\end{itemize}
\end{thm}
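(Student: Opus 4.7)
Part (i) was already established in the discussion preceding the theorem, so the task is to prove (ii). I would split the desired event into two pieces, each of probability tending to one: (A) the pair $(X_N, H_N)$ is in Case 1 of the canonical form, so that $X_N$ has the conjugate pair $\beta_N, \bar\beta_N$ together with exactly $N-1$ real eigenvalues; and (B) on top of (A), those real eigenvalues interlace $\lambda_1^N, \dots, \lambda_N^N$ as in \eqref{lambdazeta}.

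Event (A) is controlled directly via part (i). Inspecting the four canonical forms listed in Section 3, the eigenvalue of nonpositive type lies in $\Comp^+$ only in Case 1; in Cases 2--4 it is real. Since part (i) gives $\IM \beta_N \to (\sqrt{2}/2) s > 0$ in probability, the event $\{\IM \beta_N > 0\}$ has probability tending to one, and on this event $X_N$ has spectrum $\{\beta_N, \bar\beta_N, \zeta_2^N, \dots, \zeta_N^N\}$ with real $\zeta_j^N$'s.

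For (B) I would work on the almost-sure event where (a) the spectrum $\lambda_1^N < \cdots < \lambda_N^N$ of the Wigner matrix $C_N$ is simple, and (b) every coordinate $d_j^N = (U_N b_N)_j$ is nonzero. Point (a) follows from continuity of the off-diagonal entries, since the discriminant of the characteristic polynomial of $C_N$ is a nontrivial polynomial in those entries, and (b) uses (R0)--(R1): conditional on $C_N$ the diagonalising $U_N$ is fixed while $b_N$ retains a continuous distribution, so each linear coordinate $(U_N b_N)_j$ is almost surely nonzero. Then, as in the proof of Theorem \ref{realev},
$$
Q_N(z) = a_N - z + \sum_{j=1}^N \frac{|d_j^N|^2}{\lambda_j^N - z}
$$
has $N$ simple real poles with negative residues, hence runs continuously from $-\infty$ to $+\infty$ on each of the $N-1$ intervals $(\lambda_j^N, \lambda_{j+1}^N)$ and carries at least one real zero in each of them. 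By the Schur complement identity $\det(X_N - zI) = Q_N(z)\,\det(C_N - zI)$ combined with Lemma \ref{ZZ}, the zeros of $Q_N$ are exactly the eigenvalues of $X_N$ with their algebraic multiplicities. On event (A) this leaves precisely $N-1$ real zeros to populate the $N-1$ intervals, forcing exactly one per interval and yielding the strict chain \eqref{lambdazeta}.

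Intersecting (A) with the almost-sure event underlying (B) proves (ii). The most delicate point is the independence bookkeeping that guarantees $d_j^N \neq 0$ almost surely, since $U_N$ is a function of $C_N$ and one must invoke the conditional independence of $b_N$ from $C_N$ in (R1) to conclude that $U_N b_N$ inherits a continuous conditional distribution; once this genericity is secured the interlacing reduces to a routine sign-change argument on the real line.
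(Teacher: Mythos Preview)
Your argument is correct and matches the paper's proof essentially line for line: the paper also reduces to Case~1 via part~(i) (using the concrete event $|\beta_N-\beta_0|\le\tfrac{\sqrt2}4 s$), then invokes the sign-change interlacing argument from the proof of Theorem~\ref{realev} to place at least one real zero of $Q_N$ in each of the $N-1$ gaps $(\lambda_j^N,\lambda_{j+1}^N)$, and counts. The only cosmetic difference is that you spell out the almost-sure simplicity of $\sigma(C_N)$ and the nonvanishing of $d_j^N$ explicitly, whereas the paper absorbs these into its cross-reference to Theorem~\ref{realev}.
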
 
\begin{proof}(ii) Assume that 
\begin{equation}\label{beta}
|\beta_N-\beta_0|\leq \frac{\sqrt2}4s.
\end{equation}
 Then the canonical form of $(X_N,H_N)$ is as in Case 1. 
In consequence, there are exactly $N-1$ real eigenvalues $\zeta^N_2\dts \zeta^N_N$ of $X_N$. 
 Let us recall now the arguments from proof of Theorem \ref{realev}. 
The function $\hat\mu_N$ is increasing on the real line with simple poles in $\lambda_1^N\dts \lambda_N^N$. In each of the intervals $(\lambda_j^N,\lambda_{j+1}^N)$ $(j=1\dts N-1)$ there at least one of the eigenvalues $\zeta_{2}^N\dts \zeta_{N}^N$. 
Consequently,  each of the intervals $(\lambda_j^N,\lambda_{j+1}^N)$ $(j=1\dts N-1)$ contains precisely one of the eigenvalues $\zeta_{2}^N\dts \zeta_{N}^N$. To finish the proof it is enough to note that by point (i) for every $\eps>0$ there exists $N_0>0$  such that for $N>N_0$ the probability of \eqref{beta} is greater then $1-\eps$.
\end{proof}

\begin{figure}
\includegraphics[width=200pt]{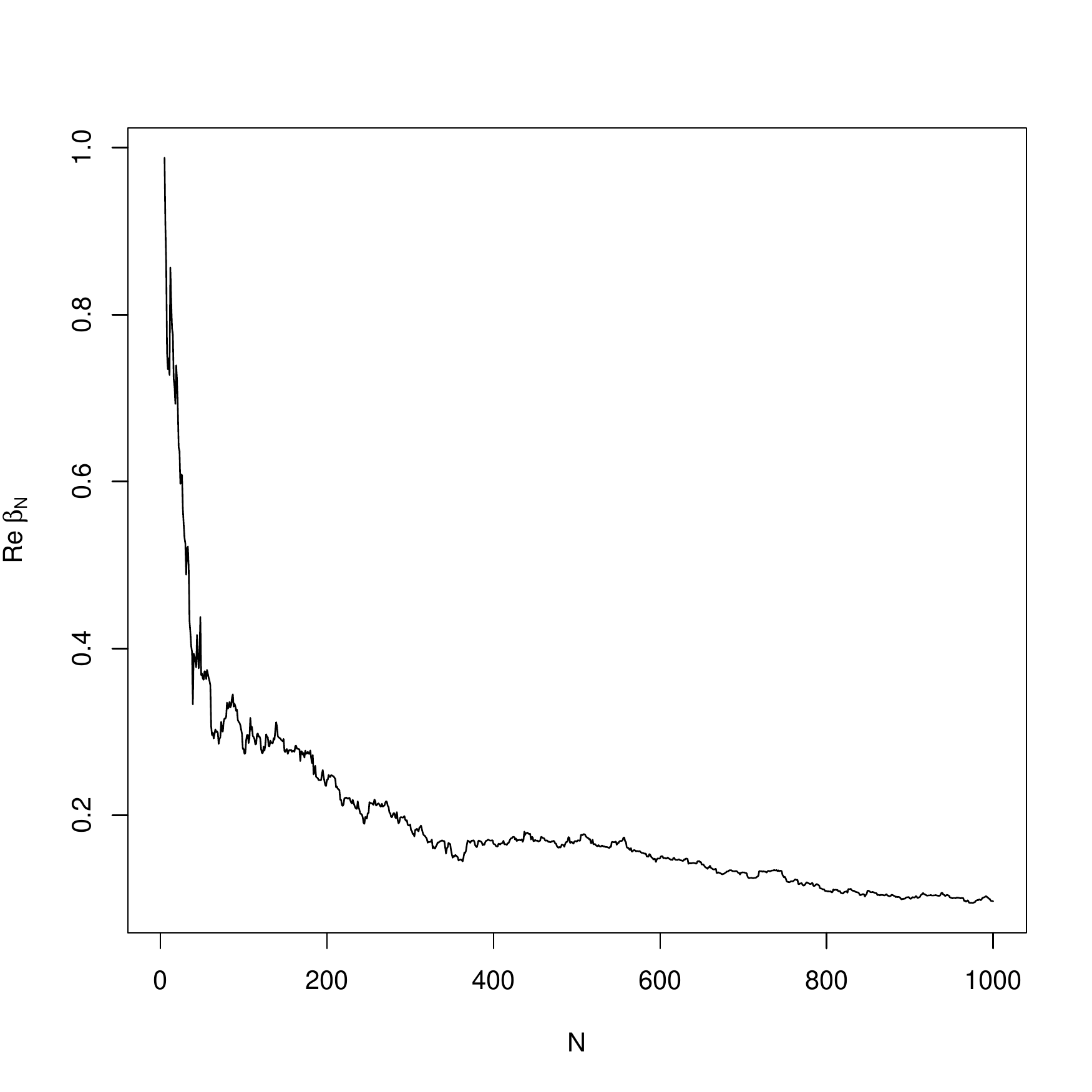}\\
\vskip -0.9cm
\includegraphics[width=200pt]{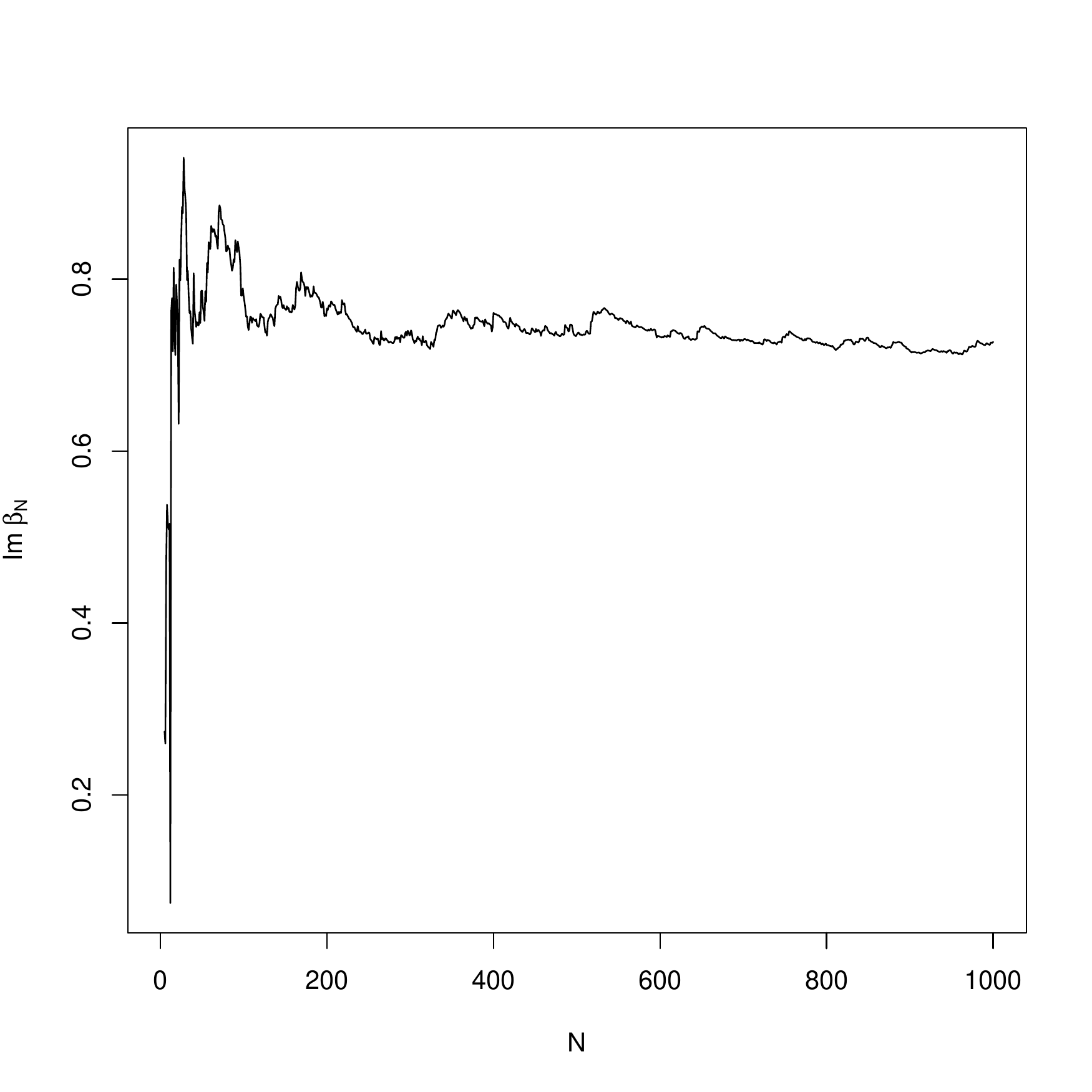}
\caption{The real and imaginary part of  $\beta_N$, with real, gaussian  entries of $X_N$ and $s^2=1$, computed with R \cite{R}.}\label{figreim}
\end{figure}

The numerical simulations of values of $\RE\beta_N$ and $\IM\beta_N$ can be found in Figure \ref{figreim}. 
Note that $\beta_0$ lies  in open upper half--plane and \eqref{x_} is satisfied. We provide now an example when $\beta_0\in\Real$ and show that each number in $[0,\infty)$ can be the limit in \eqref{x_0}. 
 Let $a_N=0$, ${x_{i0}}$ ($i=1,2,\dts$) be independent real variables of zero mean and variance ${s}^2$  and let 
 $C_N=\diag(c_1\dts c_N)$, where the random variables $\set{c_j:j=1,\dots}$ are i.i.d. and independent on  ${x_{i0}}$ ($i=1,2,\dts$). Furthermore, let the law of $c_j$ (which is simultaneously the limit measure $\mu_0$) be given by a density 
 $$
 \phi(t)=\begin{cases}\frac{ 3 t^2}2 &:  t\in [-1,1] \\ 0 &: t\in\Real\setminus[-1,1]\end{cases} .
 $$
An easy calculation shows that
$$
\lim_{z\hat\to 0}\frac{\hat\mu_0(z)}{z}= 3.
$$
Hence,  
$$
\lim_{z\hat\to 0}\frac{-z+{s}^2\hat\mu_0(z)}{z}=-1+ 3{s}^2
$$
and the function 
$$
Q_0(z)=-z+\hat\mu_0(z)=-z+\int_{\Real}\frac{\phi(t) }{t-z} dt
$$  
has a GZNT at $z=0$ if ${s}^2\leq1/3$. Note that $\beta_0=0$ lies in the support of $\mu_0$. The case ${s}^2=1/3$    is plotted in Figure \ref{F3}. Only the imaginary part is displayed, since the numerical computation of the real part of  $\beta_N$ might be not reliable in case $\beta_N\in\Real$. One may observe that the convergence of $\beta_N$ is worse in Figure  \ref{figreim}. Also, the canonical form of $(X_N,H_N)$ changes with $N$, contrary to the case when $H_NX_N$ is a Wigner matrix. In  the case ${s}^2<1/3$ in numerical simulations point $\beta_N$ is real for all $N$. 
\begin{figure}
\includegraphics[width=200pt]{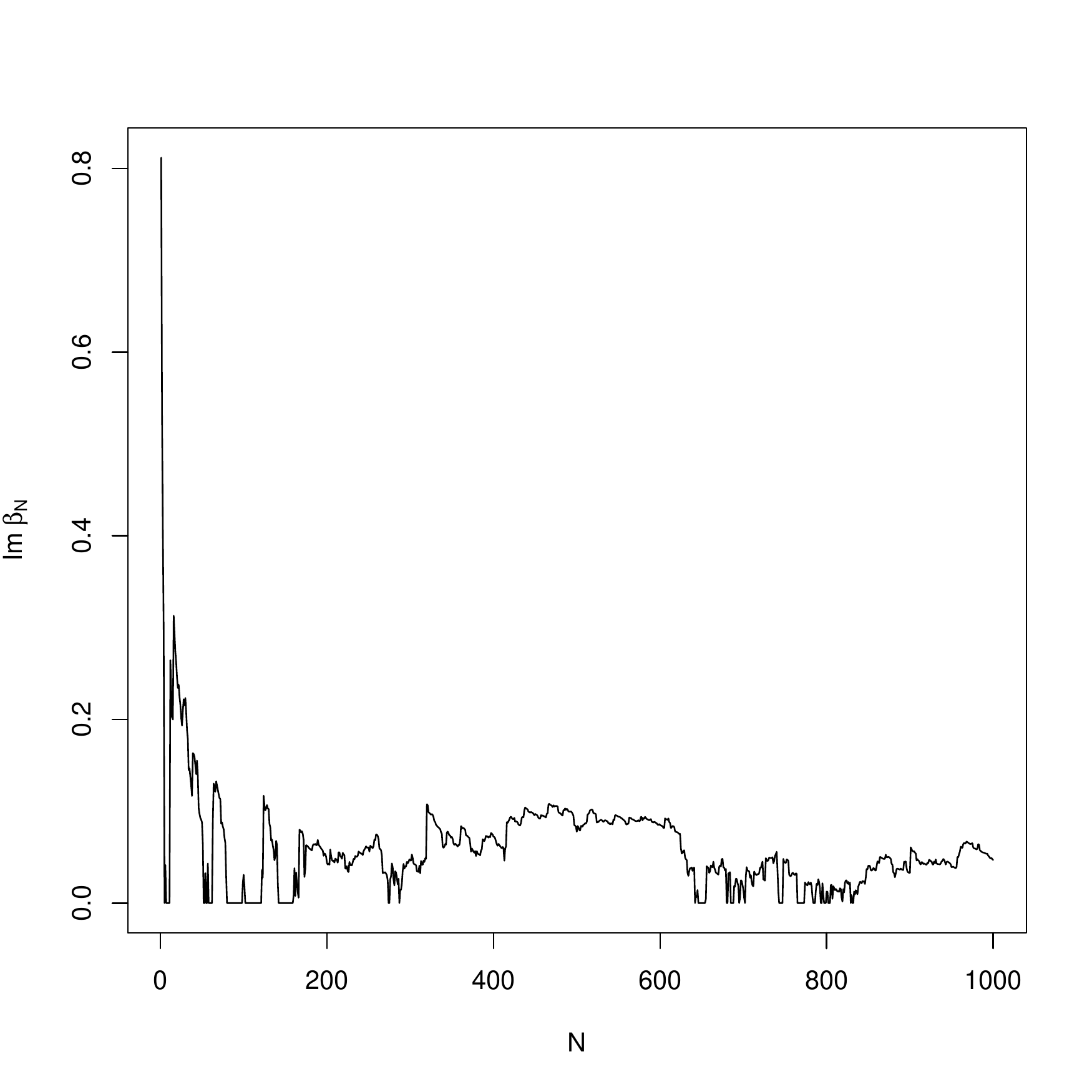}
\caption{The imaginary part of $\beta_N$.}\label{F3}
\end{figure}



\end{document}